\newcommand{\arrow}{\draw[->]}
\newcommand{\bluearrow}{\draw[blue, ->]}
\newcommand{\redarrow}{\draw[red, ->]}
\newcommand{\Z}{\mathbb{Z}}
\newcommand{\D}{\mathcal{D}}
\newcommand{\T}{\mathbb{T}}
\newcommand{\OO}{\mathcal{O}}
\newtheorem{proposition}{Proposition}[section]
\newtheorem{lemma}[proposition]{Lemma}
\newtheorem{theorem}[proposition]{Theorem}
\newtheorem{corollary}[proposition]{Corollary}
\theoremstyle{definition}
\newtheorem{remark}[proposition]{Remark}
\newtheorem{definition}[proposition]{Definition}
\newtheorem{example}[proposition]{Example}
\title{On strong shift equivalence for row-finite graphs and C*-algebras}
\author[K.A. Brix]{Kevin Aguyar Brix}
\address[K.A. Brix]{School of Mathematics and Statistics, University of Glasgow, University Place, Glasgow G12 8QQ, United Kingdom}
\email{kabrix.math@fastmail.com}
\author[P. Gautam]{Pete Gautam}
\address[P. Gautam]{School of Mathematics and Statistics, University of Glasgow, University Place, Glasgow G12 8QQ, United Kingdom}
\email{2481471G@student.gla.ac.uk}
\subjclass[2020]{37A55}
\keywords{Strong shift equivalence, graphs, graph C*-algebras}
\thanks{This work is the result of a summer project at the University of Glasgow. 
It was funded by the London Mathematical Society through an Undergraduate Research Bursary 2022 (URB--2022--42) and the School of Mathematics and Statistics at the University of Glasgow.
The first-named author was supported by the Independent Research Fund Denmark (Case number 1025-00004B)}
\begin{document}

    \begin{abstract}
     This note extends and strengthens a theorem of Bates that says that row-finite graphs that are strong shift equivalent have Morita equivalent graph C*-algebras.
  This allows us to ask whether our stronger notion of Morita equivalence does in fact characterise strong shift equivalence. 
  We believe this will be relevant for future research on infinite graphs and their C*-algebras.
  We also study insplits and outsplits as particular examples of strong shift equivalences 
  and show that the induced Morita equivalences respect a whole family of weighted gauge actions.
  We then ask whether strong shift equivalence is generated by (generalised) insplits and outsplits.
     \end{abstract}

    \maketitle

    \section{Introduction}
    \noindent Strong shift equivalence was originally defined by Williams~\cite{Williams} in his fundamental work on conjugacy of shifts of finite type. 
Shifts of finite type are represented by finite directed graphs or, equivalently, by the adjacency matrices of the graphs~\cite{Lind-Marcus}.
A pair of such finite square non-negative integer matrices $A$ and $B$ are elementary strong shift equivalent if there are rectangular non-negative integer matrices $R$ and $S$ such that $A = R S$ and $S R = B$;
and $A$ and $B$ are \emph{strong shift equivalent}, if there are $A = A_0, A_1,\dots,A_n = B$ such that $A_{i-1}$ is elementary strong shift equivalent to $A_i$ for all $i=1,\dots,n$.
Williams' theorem is then that a pair of shifts of finite type $X_A$ and $X_B$ are conjugate precisely when the adjacency matrices $A$ and $B$ of the corresponding graphs are strong shift equivalent.
If $E$ and $F$ are the graphs of $A$ and $B$, then the proof of this landmark result uses an auxiliary bipartite graph $E_3$ that is built from $E$ and $F$.
It is therefore natural to define a notion of strong shift equivalence for general directed graphs using such an auxiliary graph,
and this is the notion that we will explore in this paper.

In 1980, Cuntz and Krieger constructed C*-algebras $\OO_A$ from infinite irreducible shifts of finite type determined by the matrix $A$~\cite{Cuntz-Krieger}.
They built a large class of simple and purely infinite C*-algebras such that the full $n$-shift produced the Cuntz algebras $\OO_n$,
and today we refer to them as \emph{Cuntz--Krieger algebras}.
This construction was later generalised to C*-algebras from general and infinite graphs~\cite{raeburn2005graph},
and the advantage was a lot of the structure of the otherwise very complicated C*-algebras could be read off of the graphs immediately.
Moreover, the graph C*-algebras carry important additional structure, such as a canonical gauge action (an action of the complex unit circle)
and a commutative subalgebra usually called the diagonal subalgebra. 

In this note, we revisit the notion of strong shift equivalence for row-finite graphs from~\cite{bates2002applications}
in which Bates proves that strong shift equivalent graphs have Morita equivalent graph C*-algebras.
We note that the assumption that the graphs be row-finite is essential, and there are counterexamples in~\cite[Example 5.4]{bates2002applications}.
We extend and strengthen this theorem to include the case when the graphs have sinks
and we show that not just the C*-algebra but the triple of the graph C*-algebra, its diagonal subalgebra, and its canonical gauge action are Morita equivalent,
when the graphs are strong shift equivalent.
Inspired by work of Matsumoto in the case of irreducible shifts of finite type~\cite{Matsumoto2017}, we may then ask the question of the converse: 
suppose a pair of row-finite graphs have Morita equivalent triples of graph C*-algebras, diagonal subalgebra, and canonical gauge action,
are they then strong shift equivalent? 

Next, we study state splittings (insplits and outsplits) which give particular examples of strong shift equivalences.
The state splittings also emerged in Williams' work mentioned above, and although insplits and outsplits have dual definitions,
insplits produce isomorphic $C^*$-algebras, while outsplits produce Morita equivalent $C^*$-algebras. 
For these particular examples of strong shift equivalences, we show that the Morita equivalence of graph C*-algebras
respects a whole family of weighted gauge actions (and not just the canonical gauge action).
We then ask the question of whether strong shift equivalence as an equivalence relations is generated by (generalised) insplits and outsplits.
The adjective 'generalised' seems to be necessary; examples of conjugate graphs for which no finite chain of standard splittings can connect them will appear in~\cite{ABCEW}.
For the case of shifts of finite type, this was proved by Williams.
These questions are natural from the point of view of previous work on dynamical relations on graphs and their C*-algebras,
the main work being that of Eilers and Ruiz~\cite{eilers2019refined}.
They only consider graphs with finitely many vertices, whereas we ask kindred questions for general row-finite graphs. 

In~\cref{sec:preliminaries} we introduce the relevant notation and preliminaries on graphs and their C*-algebras, in~\cref{sec:sse} we define strong shift equivalence for row-finite graphs possibly containing sinks and we prove our strenghthening of Bates' theorem. In~\cref{sec:insplits,sec:outsplits}, we study insplits and outsplits, respectively, show that they give examples of strong shift equivalences, and study how the induced Morita equivalence respects a whole family of weighted gauge actions.
It is an unfortunate fact that the literature on graph C*-algebras uses two dual kinds of notation.
We use the convention from~\cite{raeburn2005graph} which is incidentally opposite to that of~\cite{bates2002applications}.

    \section{Preliminaries} \label{sec:preliminaries}
    For notation and terminology related to directed graphs and graph C*-algebras we use \cite{raeburn2005graph}.
\subsection{Directed graphs}
    A \emph{directed graph} $E = (E^0, E^1, r, s)$ is a tuple of countable sets $E^0$ and $E^1$ and functions $r, s\colon E^1 \to E^0$. A $v \in E^0$ is called a \emph{vertex}, and an $e \in E^1$ is called an \emph{edge}. For $e \in E^1$ and $v \in E^0$, if $r(e) = v$, then $e$ has \emph{range} $v$, and if $s(e) = v$, then $e$ has \emph{source} $v$. For $v \in E^0$, we say that $v$ is a \emph{source} if the preimage $r^{-1}(v) = \varnothing$, and $v$ is a \emph{sink} if $s^{-1}(v) = \varnothing$. The graph $E$ is \emph{row-finite} if for all $v \in E_1^0$, $r^{-1}(v)$ is finite. In this paper, we will only be concerned with row-finite graphs.
    
    A \emph{finite path} on a graph $E$ is a sequence $e_1 \cdots e_n$ where $s(e_i) = r(e_{i+1}$ for all $i = 1 \cdots n-1$ or just a vertex $v\in E^0$; we let $|e_1\cdots e_n| = n$ denote the length of the path, and if $v$ is a vertex, we put $|v| = 0$. We let $E^*$ be the collection of all finite paths and the range and source maps naturally extend to $E^*$. Two finite paths $e_1 \cdots e_n$ and $f_1 \cdots f_m$ can be concatenated precisely if $s(e_n) = r(f_1)$. 
    
\subsection{Graph C*-algebras}
    Let $E$ be a row-finite directed graph. 
    A \emph{Cuntz--Krieger $E$-family} is a collection of partial isometries $\{S_e : e\in E^1\}$ with orthogonal ranges and a collection of projections $\{P_v : v\in E^0\}$ satisfying the \emph{Cuntz--Krieger relations}:
        \begin{enumerate}[label=\textbf{(CK\arabic*)}]
            \item for all $e \in E^1$, $S_e^* S_e = P_{s(e)}$; and
            \item for all $v \in E^0$ not a source, we have
            \[P_v = \sum_{e \in r^{-1}(v)} S_e S_e^*.\]
        \end{enumerate}
    For convenience, We simply write $\{S,P\}$ for the $E$-family $\{S_e,P_v: e\in E^1, v\in E^0\}$.
    The \emph{graph C*-algebra} $C^*(E)$ of $E$ is the universal C*-algebra generated by a Cuntz--Krieger $E$-family. This means that $C^*(E)$ is generated by an $E$-family $\{s,p\}$ and that if $\{T,Q\}$ is any $E$-family in some C*-algebra $B$, then there exists a *-homomorphism $\pi\colon C^*(E) \to B$ satisfying $\pi(s_e) = T_e$ for all $e\in E^1$ and $\pi(p_v) = Q_v$ for all $v\in E^0$.
    We usually use capital letters for Cuntz--Krieger families $\{S,P\}$, but small letters $\{s,p\}$ to emphasise that the family is universal in the above sense.
    
    The graph C*-algebra $C^*(E)$ admits an action $\gamma^E$ of the circle group $\T$ given as follows:
    for $e\in E^1$ we have $\gamma^E_z(s_e) = z s_e$ for $z\in \T$ and for $v\in E^0$ we have $\gamma^E(p_v) = p_v$.
    We refer to $\gamma^E$ as the \emph{canonical gauge action}.
    
    Let $f\colon E^1 \to \Z$ be any function.
    We call this an \emph{edge function}.
    We can extend it to a function $f\colon E^*\to \Z$ by putting $f(e_1 \cdots e_n) = f(e_1) + \cdots + f(e_n)$ for every finite path $e_1 \cdots e_n$ in $E^*$ and $f(v) = 0$ for every vertex $v$ in $E^0$. 
    By the universal property of the graph C*-algebra, we may now define a \emph{weighted gauge action} $\gamma^{E,f}$ of $\T$ on $C^*(E)$ as follows:
    for $e\in E^1$ and $v\in E^0$ we let
    \[
        \gamma_z^{E,f}(s_e) = z^{f(e)} s_e, \quad \textrm{and}\quad 
        \gamma_z^{E,f}(p_v) = p_v,
    \]
    for every $z\in \T$.
    We refer to $f$ as the \emph{edge function} of $\gamma^{E,f}$.
    Note that the canonical gauge action has edge function $f=1$.
    
    \subsection{Morita equivalence}
    Two graph C*-algebras $C^*(E_1)$ and $C^*(E_2)$ are \emph{Morita equivalent} if there exist a C*-algebra $C$ and complementary full projections $P_1$ and $P_2$ in $M(C)$, the multiplier algebra of $C$, and *-isomorphisms $\pi_1\colon C^*(E_1) \to P_1 C P_1$ and $\pi_2\colon C^*(E_2) \to P_2 C P_2$.
    In this paper, we also consider actions of the circle group $\T$ on graph C*-algebras.
    If $\alpha\colon \T\curvearrowright C^*(E_1)$ and $\beta\colon \T\curvearrowright C^*(E_2)$ are actions, then $(C^*(E_1), \alpha)$ and $(C^*(E_2), \beta)$ are \emph{equivariantly Morita equivalent}
    if, in addition, $C$ carries an action $\gamma$ of $\T$ such that $\pi_1\circ \alpha = \gamma \circ \pi_1$ and $\pi_2\circ \beta = \gamma\circ \pi_2$, cf. \cite[Section 4]{Combes}.
    
    The graph C*-algebras carry distinguished diagonal subalgebras,
    and we say that the pairs $(C^*(E_1),\D(E_1))$ and $(C^*(E_2), \D(E_2))$ are \emph{Morita equivalent} if the C*-algebra $C$ above carries a distinguished subalgebra $\D(C)$ such that $\pi_1(\D(E_1)) = P_1 \D(C)$ and $\pi_2(\D(E_2)) = P_2 \D(C)$.
    
    Combining the two conditions, we simply say that the triples 
    \[
    (C^*(E_1), \gamma^{E_1}, \D(E_1))\quad \textrm{and}\quad (C^*(E_2), \gamma^{E_2}, \D(E_2))
    \]
    are Morita equivalent if the *-isomorphisms $\pi_1$ and $\pi_2$ are both equivariant and diagonal-preserving.

    \section{Strong Shift Equivalence} \label{sec:sse}
    In this section, we will explore the concept of strong shift equivalence, a graph theoretical property with origins in symbolic dynamics. 
    Improving a theorem of Bates, we find that two graphs that are strong shift equivalent have Morita equivalent triples of graph C*-algebra, gauge action, and diagonal subalgebra.
    This gives us two *-isomorphisms, which we will then use to show equivariance with edge functions with a general result, and apply it to two specific cases that we will use in the later sections.
    
    The definition below is from~\cite[Section 5]{bates2002applications} (cf.~\cite{Ashton}).
    
    \begin{definition}[Strong shift equivalence] 
        \label{def:sse}
        Let $E_1$ and $E_2$ be row-finite graphs. We say that $E_1$ and $E_2$ are \emph{elementary strong shift equivalent} if there exists a row-finite graph $E_3$ such that:
        \begin{itemize}
            \item $E_3^0 = E_1^0 \cup E_2^0$ and $E_1^0 \cap E_2^0 = \varnothing$;
            \item $E_3^1 = E_{21}^1 \cup E_{12}^1$, where $E_{21}^1$ consists of edges with source in $E_1^0$ and range in $E_2^0$, and $E_{12}^1$ vice versa;
            \item there exist source- and range-preserving bijections $\theta_1\colon E_1^1 \to E_3^2(E_1^0, E_1^0)$ and $\theta_2\colon E_2^1 \to E_3^2(E_2^0, E_2^0)$, where $E_3^2(A, B)$ are paths in $E_3$ of length 2 with source $A \subseteq E_3^0$ and range $B \subseteq E_3^0$;
            \item if $v \in E_3^0$ is a source, then there exists some $\eta \in E_3^1$ such that $s_3^{-1}(v) = \{\eta\}$ and $r_3^{-1}(r_3(\eta)) = \{\eta\}$.
        \end{itemize}
        If so, we say that \emph{$E_1$ and $E_2$ are elementary SSE via $E_3$}.
        Strong shift equivalence is then the equivalence relation on row-finite directed graphs generated by elementary strong shift equivalence.
    \end{definition}
    
    \begin{remark}
        The first three conditions for SSE are given in~\cite[Definition~5.1]{bates2002applications}.
        The final condition allows us to ensure that we still have Morita equivalence between $C^*(E_1)$ and $C^*(E_2)$ (see the result below) when we include graphs with sources. 
        Note that this is slightly different from what Bates suggests in~\cite[Remark 5.3]{bates2002applications}.
    \end{remark}
    
    \begin{example}
        Let $E_1$ and $E_2$ be the following graphs:
        \begin{figure}[H]
            \centering
            \begin{subfigure}{0.45\textwidth}
                \centering
                \begin{tikzpicture}
                    \node (w) at (0, 0) {$w$};
                    \node (x) at (1.5, 0) {$x$};
                    \node (y) at (3, 0.75) {$y$};
                    \node (z) at (3, -0.75) {$z$};
    
                    \arrow (w) -- node[above] {$e$} (x);
                    \arrow (x) -- node[above] {$f$} (y);
                    \arrow (x) -- node[below] {$g$} (z);
                \end{tikzpicture}
                \caption{$E_1$}
            \end{subfigure}
            \hfill
            \begin{subfigure}{0.45\textwidth}
                \centering
                \begin{tikzpicture}
                    \node (w) at (0, 0) {$\overline{w}$};
                    \node (x1) at (1.5, 0.75) {$\overline{x}^1$};
                    \node (x2) at (1.5, -0.75) {$\overline{x}^2$};
                    \node (y) at (3, 0.75) {$\overline{y}$};
                    \node (z) at (3, -0.75) {$\overline{z}$};
    
                    \arrow (w) -- node[above] {$\overline{e}^1$} (x1);
                    \arrow (w) -- node[below] {$\overline{e}^2$} (x2);
                    \arrow (x1) -- node[above] {$\overline{f}$} (y);
                    \arrow (x2) -- node[below] {$\overline{g}$} (z);
                \end{tikzpicture}
                \caption{$E_2$}
            \end{subfigure}
        \end{figure}
        \noindent Note that both $E_1$ and $E_2$ have sources ($w$ in $E_1$ and $\overline{w}$ in $E_2$). Moreover, $E_1$ and $E_2$ are SSE by the following graph $E_3$:
        \begin{figure}[H]
            \centering
            \begin{tikzpicture}
                \node (w) at (0, 2) {$w$};
                \node (x) at (1, 2) {$x$};
                \node (y) at (2, 2) {$y$};
                \node (z) at (3, 2) {$z$};

                \node (W) at (-0.5, 0) {$\overline{w}$};
                \node (X1) at (0.5, 0) {$\overline{x}^1$};
                \node (X2) at (1.5, 0) {$\overline{x}^2$};
                \node (Y) at (2.5, 0) {$\overline{y}$};
                \node (Z) at (3.5, 0) {$\overline{z}$};

                \bluearrow (w) -- (W);
                \bluearrow (x) -- (X1);
                \bluearrow (x) -- (X2);
                \bluearrow (y) -- (Y);
                \bluearrow (z) -- (Z);

                \redarrow (W) -- (x);
                \redarrow (X1) -- (y);
                \redarrow (X2) -- (z);
            \end{tikzpicture}
        \end{figure}
        \noindent The blue arrows are in $E_{21}^1$ and the red arrows are in $E_{12}^1$.
    \end{example}

    \begin{theorem} \label{thm:morita-equivalence-sse}
        Let $E_1$ and $E_2$ be row-finite directed graphs that are SSE via $E_3$. Then there exist complementary full projections 
        \[P_1 = \sum_{v \in E_1^0} r_v, \qquad P_2 = \sum_{v \in E_2^0} r_v\]
        in $M(C^*(E_3))$, an action $\alpha$ of $\T$ on $C^*(E_3)$, and *-isomorphisms $\pi_1\colon C^*(E_1) \to P_1C^*(E_3)P_1$ that is $\gamma^{E_1}-\alpha$-equivariant and $\pi_2\colon C^*(E_2) \to P_2C^*(E_3)P_2$ that is $\gamma^{E_2}-\alpha$-equivariant and such that $\pi(\D(E_1)) = P_1 \D(E_3) P_1$ and $\pi(\D(E_2)) = P_2 \D(E_3) P_2$.
        In particular, the triples $(C^*(E_1), \gamma^{E_1}, \D(E_1))$ and $(C^*(E_2), \gamma^{E_2}, \D(E_2))$ are Morita equivalent. 
    \end{theorem}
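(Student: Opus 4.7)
The plan is to realise Cuntz--Krieger $E_i$-families inside $C^*(E_3)$ via the path bijections $\theta_i$, invoke universality to produce the *-homomorphisms $\pi_i$, and then manufacture a gauge action on $C^*(E_3)$ that makes $\pi_i$ equivariant.

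Let $\{t_\eta, r_v\}$ denote the canonical CK $E_3$-family and, for $e \in E_i^1$ with $\theta_i(e) = \zeta_e \eta_e$, set $S_e := t_{\zeta_e} t_{\eta_e}$ and $P_v := r_v$ for $v \in E_i^0$. I would first verify that $\{S_e, P_v\}$ is a CK $E_i$-family. Relation (CK1) is direct from source-preservation of $\theta_i$; mutual orthogonality of the ranges of the $S_e$ follows from injectivity of $\theta_i$ together with the usual orthogonality for length-2 paths in $C^*(E_3)$. For (CK2) at $v \in E_i^0$ not a source in $E_i$, one rewrites
\[
\sum_{e : r_{E_i}(e)=v} S_e S_e^* = \sum_{\zeta : r_3(\zeta)=v} t_\zeta \Bigl(\sum_{\eta: r_3(\eta)=s_3(\zeta)} t_\eta t_\eta^*\Bigr) t_\zeta^*.
\]
If $s_3(\zeta)$ were a source in $E_3$, the fourth bullet of \cref{def:sse} would force $\zeta$ to be the unique incoming edge of $v$ in $E_3$ and there would be no length-2 path in $E_3$ ending at $v$ with source in $E_i^0$, contradicting $v$ not being a source in $E_i$. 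Hence each inner sum collapses to $r_{s_3(\zeta)}$, and a further application of (CK2) at $v$ in $C^*(E_3)$ gives $r_v = P_v$. Universality then yields $\pi_i : C^*(E_i) \to C^*(E_3)$ with $\pi_i(s_e) = t_{\theta_i(e)}$ and $\pi_i(p_v) = r_v$.

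For the circle action, choose the edge function $f : E_3^1 \to \Z$ with $f|_{E_{21}^1} \equiv 1$ and $f|_{E_{12}^1} \equiv 0$ and set $\alpha := \gamma^{E_3, f}$; every path in $E_3^2(E_i^0, E_i^0)$ alternates between the two vertex classes and so carries total weight $1$, whence $\alpha_z \circ \pi_i = \pi_i \circ \gamma^{E_i}_z$. Since $\pi_i(p_v) = r_v \neq 0$ for all $v$, the gauge-invariant uniqueness theorem gives injectivity of $\pi_i$. Next, put $P_i := \sum_{v \in E_i^0} r_v$ as a strictly convergent sum of orthogonal projections in $M(C^*(E_3))$; these are complementary since $E_3^0 = E_1^0 \sqcup E_2^0$. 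Fullness of $P_i$ and the identification $\pi_i(C^*(E_i)) = P_i C^*(E_3) P_i$ come from a spanning argument: any $r_w$ with $w \in E_{3-i}^0$ lies in $\overline{C^*(E_3) P_i C^*(E_3)}$ via $r_w = \sum_\eta t_\eta r_{s_3(\eta)} t_\eta^*$, and any generator $t_\mu t_\nu^*$ of the corner decomposes uniquely into consecutive length-2 $\theta_i$-blocks, with source-adjacent edges absorbed through identities $t_\mu = \sum_\zeta t_{\mu\zeta} t_\zeta^*$ when the joint source is not a source of $E_3$ and via the fourth axiom otherwise.

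Finally, for the diagonal: $\pi_i$ sends each generator $s_\mu s_\mu^*$ of $\D(E_i)$ to $t_{\theta_i(\mu)} t_{\theta_i(\mu)}^* \in \D(E_3)$, giving $\pi_i(\D(E_i)) \subseteq P_i \D(E_3) P_i$; conversely, any $t_\nu t_\nu^* \in P_i \D(E_3) P_i$ can be decomposed as above into sums of $\theta_i$-blocks lying in $\pi_i(\D(E_i))$. I expect the main obstacle to be the careful bookkeeping around vertices that are sources in $E_3$ — precisely the case excluded from Bates' original treatment and responsible for the counterexample in~\cite[Example~5.4]{bates2002applications} — but the fourth bullet of \cref{def:sse} is engineered exactly to make the relevant CK-sums and diagonal decompositions terminate correctly.
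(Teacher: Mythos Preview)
Your proposal is correct and follows essentially the same architecture as the paper: construct the CK $E_i$-family in $C^*(E_3)$ via $\theta_i$, obtain $\pi_i$ by universality, verify injectivity through gauge-invariant uniqueness, and identify the image with the corner (and its diagonal with the corner of $\D(E_3)$) by decomposing $t_\mu t_\nu^*$ according to whether the path lengths are even or odd, invoking the fourth axiom of \cref{def:sse} at sources in $E_3$.

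The one noteworthy deviation is your choice of $\alpha$. The paper takes $\alpha_z(u_\eta) = z^{1/2} u_\eta$ uniformly on $E_3^1$, which makes the equivariance computation symmetric in $i=1,2$ but carries the awkwardness that $z\mapsto z^{1/2}$ is not single-valued on $\T$; one must implicitly reparametrise to obtain a genuine $\T$-action. Your choice $\alpha = \gamma^{E_3,f}$ with $f|_{E_{21}^1}\equiv 1$, $f|_{E_{12}^1}\equiv 0$ is an honest integer-weighted gauge action on $C^*(E_3)$ from the outset, and since every length-two path between same-coloured vertices picks up exactly one edge from each class, both $\pi_1$ and $\pi_2$ are simultaneously equivariant. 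This is a cleaner realisation of the same idea. You also make explicit the verification of (CK2) and of fullness of $P_i$ (handling the source case via the fourth axiom), steps the paper leaves to Bates; your treatment of these is correct.
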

    
    \begin{proof}
        We follow the proof of \cite[Theorem~5.2]{bates2002applications}.
        Bates proves that the graph $C^*$-algebras are Morita equivalence assuming the graphs have no sources. 
        Below we emphasise the parts of the proof that are new. 
        
        Suppose $E_1$ and $E_2$ are SSE via $E_3$ and choose bijections $\theta_1$ and $\theta_2$ in accordance with~\cref{def:sse}.
        Let $\{s,p\}$ and $\{u,r\}$ be the canonical Cuntz--Krieger families in $C^*(E_1)$ and $C^*(E_3)$, respectively.
        Define a Cuntz--Krieger $E_1$ family $\{S,P\}$ in $C^*(E_3)$ as follows:
        for $e\in E_1^1$ let $S_e = u_{\theta_1(e)}$, and for $v\in E_1^0$ let $P_v = r_v$.
        The proof that this is a Cuntz--Krieger $E_1$ family is similar to Bates' proof. 
        By the universal property of graph C*-algebras, there exists a *-homomorphism $\pi_1\colon C^*(E_1) \to C^*(E_3)$ given by $\pi_1(s_e) = S_e$ for $e\in E_1^1$ and $\pi_1(p_v) = P_v$ for $v\in E_1^0$.
        
        Note that we have $P_1P_vP_1 = P_v$ for all $v \in E_1^0$, and for all $e \in E_1^1$ we have $P_1S_eP_1 = S_e$. Since $\{S, P\}$ generates $C^*(S, P)$, we find that $C^*(S, P) \subseteq P_1C^*(E_3)P_1$.
        We claim that this is in fact an equality.
        
        Let $\mu, \nu \in E_3^*$. 
        By \cite[Lemma~2.10]{raeburn2005graph}, we know that
        \[P_1 u_\mu u_\nu^* P_1 = \begin{cases}
            u_\mu u_\nu^* & r_3(\mu), r_3(\nu) \in E_1^0 \\
            0 & \textrm{otherwise}.
        \end{cases}\]
        If $u_\mu u_\nu^* \neq 0$, then we also have $s_3(\mu) = s_3(\nu)$. So, if $P_1s_\mu s_\nu^*P_1 \neq 0$, then we find that either both $\mu, \nu$ have even length, or $\mu, \nu$ have odd length. If $\mu, \nu$ have even length, then $\mu = \alpha_1 \dots \alpha_m$ and $\beta = \beta_1 \dots \beta_n$, with $\alpha_i, \beta_j \in E^2_3(E_1^0, E_1^0)$ for $1 \leq i \leq m$ and $1 \leq j \leq n$. So, 
        \begin{align*}
            P_1u_\mu u_\nu^*P_1 = u_\mu u_\nu^* &= u_{\alpha_1} \dots u_{\alpha_n} u_{\beta_m}^* \dots u_{\beta_1}^* \\
            &= S_{\theta_1^{-1}(\alpha_1)} \dots S_{\theta_1^{-1}(\alpha_n)} S_{\beta_m}^* \dots S_{\beta_1}^* \in C^*(S, P).
        \end{align*}
        Instead, we can have $\mu, \nu$ both of odd length. If $s_3(\mu) = s_3(\nu)$ is not a source (in $E_3$), then we know that 
        \[r_{s_3(\mu)} = \sum_{\eta \in r_3^{-1}(s_3(\mu))} u_\eta u_\eta^*.\]
        This implies that
        \[u_\mu u_\nu^* = u_\mu r_{s_3(\mu)} u_\nu^* = \sum_{\eta \in r_3^{-1}(s_3(\mu))} u_{\mu \eta} u_{\nu \eta}^*.\]
        Since $\mu \eta$ and $\nu \eta$ are paths of even length starting and ending at $E_1^0$ for all $\eta \in r_3^{-1}(s_3(\mu))$, we again find that $u_\mu u_\nu^* \in C^*(S, P)$. Otherwise, we know that $s_3(\mu)$ is a source (in $E_3$). In that case, $s_3^{-1}(\mu_1) = \{\eta\}$, with $r_3^{-1}(r_3(\eta)) = \{\eta\}$, for some $\eta \in E_3^1$. Now, denote $\mu = \mu' \eta$ and $\nu = \nu' \eta$. Then
        \[u_\mu u_\nu^* = u_{\mu'} u_\eta u_\eta^* u_{\nu'}^* = u_{\mu'} r_{r_3(\eta)} u_{\nu'}^* = u_{\mu'} r_{s_3(\mu)} u_{\nu'}^* = u_{\mu'} u_{\nu'}^*.\]
        We know that $\mu'$ and $\nu'$ are paths of even length starting and ending at $E_1^0$, so we find that $u_\mu u_\nu^* \in C^*(S, P)$. 
        Thus we see that $u_\mu u_\nu^* \in C^*(S, P)$, for all $\mu, \nu \in E_3^*$. Hence, the image of $\pi_1$ is $C^*(S, P) = P_1C^*(E_3)P_1$.
        
        Next, we show that the map $\pi_1$ is injective. 
        Define a continuous $\T$-action $\alpha$ on $C^*(E_3)$ by
        \[\alpha_z(u_{\eta}) =  \begin{cases}
            z^{1/2} u_\eta & \textrm{if}~\eta \in E_{21}^1 \\
            z^{1/2} u_\eta & \textrm{if}~\eta \in E_{12}^1,
        \end{cases}\]
        and $\alpha_z(r_v) = r_v$. Now, for $z \in \T$ and $e \in E_1^1$, we find that 
        \[
            \alpha_z(S_e) = \alpha_z(u_{\theta_1(e)}) = \alpha_z(u_{\eta_2}) \alpha_z(u_{\eta_1}) = z u_{\eta_2} u_{\eta_1} = z S_e,
        \]
        where $\theta_1(e) = \eta_2 \eta_1$, and for all $v \in E_1^0$, $\alpha_z(P_v) = \alpha_z(r_v) = r_v = P_v$. 
        Note that $\alpha$ fixes each partial sum of $P$, and so $P$ remains fixed on the extension to $M(C^*(E_3))$.
        This means that $\alpha$ restricts to an action on $P_1 C^*(E_3) P_1 = C^*(S,P)$ that satisfies the hypotheses of the gauge-invariant uniqueness theorem~\cite[Theorem 2.2]{raeburn2005graph}.
        This shows that $\pi_1$ is injective.
    
        We now show that $\pi_1(\D(E_1)) = P_1\D(E_3)P_1$. Let $\alpha \in E_1^*$. Then
        \begin{align*}
            S_\alpha S_\alpha^* &= S_{\alpha_n} \dots S_{\alpha_1} S_{\alpha_1}^* \dots S_{\alpha_{n-1}}^* S_{\alpha_n}^* \\
            &= u_{\theta_1(\alpha_n)} \dots u_{\theta_1(\alpha_1)} u_{\theta_1(\alpha_1)}^* \dots u_{\theta_1(\alpha_n)}^* = u_\beta u_\beta^*,
        \end{align*}
        with $\beta = \theta_1(\alpha_n) \theta_1(\alpha_{n-1}) \dots \theta_1(\alpha_1)$. We know that $u_{\theta_1(\alpha_i)} \in P_1C^*(E_3)P_1$ for all $1 \leq i \leq |\alpha|$, so we find that $S_\alpha S_\alpha^* = u_\beta u_\beta^* \in P_1\D(E_3)P_1$. So, $\pi(\D(E_1)) \subseteq P_1\D(E_3)P_1$.
        
        Next, let $\beta \in E_3^*$ such that $u_\beta u_\beta^* \in P_1\D(E_3)P_1$. We must have $P_1u_\beta u_\beta^*P_1 = u_\beta u_\beta^*$, so $r_3(\beta) \in E_1^0$. If $|\beta|$ is even, then we can denote $\beta = \beta_{n} \beta_{n-1} \dots \beta_1$, with $|\beta_i| = 2$ and $\beta_i \in E^1_3(E_1^0, E_1^0)$ for $1 \leq i \leq n$. So, we have $S_{\theta_1^{-1}(\beta_i)} = u_{\beta_i}$ for $1 \leq i \leq |\beta|$. Hence, 
        \[u_\beta u_\beta^* = S_\alpha S_\alpha^*,\]
        with $\alpha = \theta_1^{-1}(\beta_n) \theta^{-1}(\beta_{n-1}) \dots \theta^{-1}(\beta_1)$. Next, assume that $|\beta|$ is odd. If $s_3(\beta)$ is not a source (in $E_3$), then
        \[u_\beta u_\beta^* = u_\beta p_{s_3(\beta)} u_\beta^* = \sum_{\eta \in r_3^{-1}(s_3(\beta))} u_{\beta \eta} u_{\beta \eta}^*.\]
        Since $r_3(\beta \eta) = r_3(\beta) \in E_1^0$ and $|\beta \eta|$ is even, the argument above shows that $u_{\beta \eta} u_{\beta \eta}^* \in \pi(\D(E_1))$ for all $\eta \in r_3^{-1}(s(\beta))$. Hence, $u_\beta u_\beta^* \in \pi(\D(E_1))$.
        Otherwise, $s_3(\beta)$ is a source (in $E_3$). In that case, $s_3^{-1}(\beta_1) = \{\eta\}$, with $r_3^{-1}(r_3(\eta)) = \{\eta\}$, for some $\eta \in E_3^1$. Now, denote $\beta = \beta' \eta$. Then
        \[u_\beta u_\beta^* = u_{\beta'} u_\eta u_\eta^* u_{\beta'}^* = u_{\beta'} p_{r_3(\eta)} u_{\beta'}^* = u_{\beta'} p_{s_3(\beta')} u_{\beta'}^* = u_{\beta'} u_{\beta'}^*.\]
        Since $\beta'$ has even length, we find that $u_\beta u_\beta^* \in \pi(\D(E_1))$. Therefore, $P_1 \D(E_3) P_1 \subseteq \pi(\D(E_1))$. This implies that $\pi(\D(E_1)) = P_1 \D(E_3) P_1$.
        
        A symmetric argument shows that there is a *-isomorphism $\pi_2\colon C^*(E_2) \to P_2 C^*(E_3) P_2$ that is diagonal-preserving and intertwines the gauge action $\gamma^{E_2}$ and $\alpha$.
        We conclude that the triples $(C^*(E_1), \gamma^{E_1} ,\D(E_1))$ and $(C^*(E_2),  \gamma^{E_2}, \D(E_2))$ are Morita equivalent.
        \end{proof}
    
    We are interested in constructing edge potentials for $E_1$ and $E_2$ (and $E_3$), given just the edge potential for $E_1$ or $E_2$. Moreover, we want these potentials to preserve the corresponding weighted gauge actions with respect to the maps $\pi_1$ and $\pi_2$. It turns out that this is equivalent to the following graph-theoretical property.
    
    \begin{definition}
        \label{def:weight-preserving}
        Let $E_1$ and $E_2$ be row-finite graphs that are SSE via $E_3$, and let $f\colon E_1^* \to \Z$, $g\colon E_2^* \to \Z$ and $h\colon E_3^* \to \Z$ be edge functions. We say that $\theta_1$ is \emph{weight-preserving (with respect to $f$ and $h$)} if for all $e \in E_1^1$, $h(\theta_1(e)) = f(e)$. Similarly, we say that $\theta_2$ is \emph{weight-preserving (with respect to $g$ and $h$)} if for all $\overline{e} \in E_2^1$, $h(\theta_2(\overline{e})) = g(\overline{e})$. If both conditions are satisfied, we say that $\theta_1$ and $\theta_2$ are \emph{weight-preserving (with respect to $f, g$ and $h$)}.
    \end{definition}

    \begin{lemma}
        \label{prp:weight-preserving-iff-gauge-action}
        Let $E_1$ and $E_2$ be row-finite graphs that are SSE via $E_3$, 
        choose $\theta_1$ and $\theta_2$ in accordance with~\cref{def:sse}, and let $f\colon E_1^* \to \Z$, $g\colon E_2^* \to \Z$ and $h\colon E_3^* \to \Z$ be edge functions. 
        Then $\theta_1$ and $\theta_2$ are weight-preserving if and only if $\pi_1 \circ \gamma_z^{E_1, f} = \gamma_z^{E_3, h} \circ \pi_1$ and $\pi_2 \circ \gamma_z^{E_2, g} = \gamma_z^{E_3, h} \circ \pi_2$,
        for all $z\in \T$.
    \end{lemma}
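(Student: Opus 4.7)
The plan is to reduce the claim to a computation on the canonical generators $s_e$, $p_v$ of $C^*(E_1)$ and the corresponding generators of $C^*(E_2)$, using that each of the four maps $\pi_i \circ \gamma_z^{E_i,\cdot}$ and $\gamma_z^{E_3,h} \circ \pi_i$ is a $*$-homomorphism from $C^*(E_i)$ into $C^*(E_3)$. Two $*$-homomorphisms out of the universal graph C*-algebra agree everywhere if and only if they agree on the generators, so the entire lemma is equivalent to an equality of the form $\pi_i(\gamma_z^{E_i,\cdot}(x)) = \gamma_z^{E_3,h}(\pi_i(x))$ for $x$ ranging over the generators of $C^*(E_i)$.

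For the vertex generators the equality is automatic in both directions: both weighted gauge actions fix every vertex projection, and $\pi_i$ sends $p_v$ (resp.\ the $E_2$-analogue) to $r_v$, which is also fixed by $\gamma_z^{E_3,h}$. The content is therefore entirely on the edge generators. For $e \in E_1^1$, writing $\theta_1(e) = \eta_2 \eta_1$ as a length-2 path in $E_3$, I would compute
\[
\pi_1(\gamma_z^{E_1,f}(s_e)) \;=\; z^{f(e)} u_{\theta_1(e)},
\qquad
\gamma_z^{E_3,h}(\pi_1(s_e)) \;=\; \gamma_z^{E_3,h}(u_{\eta_2} u_{\eta_1}) \;=\; z^{h(\eta_2)+h(\eta_1)} u_{\theta_1(e)} \;=\; z^{h(\theta_1(e))} u_{\theta_1(e)},
\]
using the additive extension of $h$ from $E_3^1$ to $E_3^*$. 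The analogous computation will hold for $\pi_2$ and $e \in E_2^1$.

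From this identity the forward direction ($\Rightarrow$) is immediate: if $\theta_1$ is weight-preserving, then $h(\theta_1(e)) = f(e)$ and the two expressions coincide, giving equality on every edge generator; combined with the vertex case and the universal property this yields the intertwining relation for $\pi_1$, and symmetrically for $\pi_2$. For the converse ($\Leftarrow$), assuming the intertwining relations and evaluating on $s_e$ one obtains $z^{f(e)} u_{\theta_1(e)} = z^{h(\theta_1(e))} u_{\theta_1(e)}$ for every $z \in \T$. Since $\theta_1(e)$ is a legitimate path in $E_3$, the element $u_{\theta_1(e)}$ is a nonzero partial isometry, so cancelling it forces $z^{f(e)} = z^{h(\theta_1(e))}$ for all $z \in \T$, whence $f(e) = h(\theta_1(e))$; the argument for $\theta_2$ is identical.

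There is no real obstacle here: the only point requiring minor care is the nonvanishing of $u_{\theta_1(e)}$ needed to cancel it in the reverse direction, which follows from the faithfulness of the canonical generators on paths in a graph C*-algebra (each $s_\mu$ for $\mu \in E^*$ is a nonzero partial isometry). The role of the lemma is really to package the compatibility $h \circ \theta_i = f$ (resp.\ $g$) at the level of edges as an intertwining of the corresponding weighted gauge actions.
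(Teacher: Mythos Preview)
Your proposal is correct and follows essentially the same approach as the paper's proof: both reduce the intertwining identity to the canonical generators, observe that the vertex case is trivial, and compute on edge generators to obtain $z^{f(e)} u_{\theta_1(e)} = z^{h(\theta_1(e))} u_{\theta_1(e)}$, from which both directions follow. The only minor addition in your write-up is the explicit remark that $u_{\theta_1(e)} \neq 0$ is needed to cancel in the converse direction, a point the paper leaves implicit.
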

    \begin{proof}
        Let $\{s,p\}$ be a universal Cuntz--Krieger family for $C^*(E_1)$.
        First, assume that $\theta_1$ is weight-preserving. Then for all $z \in \T$ and $e \in E_1^1$,
        \[(\pi_1 \circ \gamma^{E_1, f}_z)(s_e) = z^{f(e)} \pi_1(s_e) = z^{h(\theta_1(e))} u_{\theta_1(e)} =  (\gamma^{E_3, h}_z \circ \pi_1)(s_e).\]
        Moreover, for all $z \in \T$ and $v \in E_1^0$,
        \[(\pi_1 \circ \gamma_z^{E_1, f})(p_v) = \pi_1(p_v) = r_v = (\gamma_z^{E_3, h} \circ \pi_1)(p_v).\]
        Hence, $\pi_1 \circ \gamma^{E_1, f} = \gamma^{E_3, h} \circ \pi_1$. Similarly, we also find that $\pi_2 \circ \gamma^{E_2, g} = \gamma^{E_3, h} \circ \pi_2$.

        Now, assume that $\pi_1 \circ \gamma^{E_1, f} = \gamma^{E_3, h} \circ \pi_1$ and $\pi_1 \circ \gamma^{E_2, g} = \gamma^{E_3, h} \circ \pi_2$, and let $e \in E_1^1$. Then for all $z \in \T$,
        \[z^{h(\theta_1(e))} u_{\theta_1(e)} = (\gamma_z^{E_3, h} \circ \pi_1)(s_e) = (\pi_1 \circ \gamma_z^{E_1, f})(s_e) = z^{f(e)} u_{\theta_1(e)}.\]
        So, $h(\theta_1(e)) = f(e)$. Similarly, we also have $h(\theta_2(e)) = g(e)$ for all $e \in E_2^1$. This implies that $\theta_1$ is weight-preserving.
        A similar argument applies to $\theta_2$.
    \end{proof}

    \begin{remark} \label{rmk:only-E3-weight}
        This condition of weight-preserving can be further simplified. If there exist edge functions $f\colon E_1^* \to \Z$ and $h\colon E_3^* \to \Z$ such that $\theta_1$ is weight-preserving, then we can define $g\colon E_2^* \to \Z$ by $g(\overline{e}) = h(\theta_2(\overline{e}))$ so that $\theta_2$ is precisely weight-preserving.
    \end{remark}
    
    We will establish two ways in which we can exploit the structure of SSE to construct the map $g$ given the map $f$. We will later see that these weighted functions will allow us to construct the `natural' edge potential $g$ for $E_2$ for both in-splits and out-splits.
    
    \begin{proposition}
        \label{prp:weight-preserving-with-bijection-E21}
        Let $E_1$ and $E_2$ be row-finite graphs that are SSE via $E_3$, 
        and assume there is a bijection $\phi\colon E_1^1 \to E_{12}^1$ such that for $e \in E_1^1$, if $\theta_1(e) = \eta_2\eta_1$, then $\phi(e) = \eta_2$. 
        Then for any edge function $f\colon E_1^1 \to \Z$, there exist corresponding edge functions $g\colon E_2^1 \to \Z$ and $h\colon E_3^1 \to \Z$ such that $\theta_1$ and $\theta_2$ are weight-preserving.
    \end{proposition}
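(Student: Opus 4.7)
The bijection $\phi$ lets us single out, for each $e \in E_1^1$, the ``first half'' $\eta_2 = \phi(e) \in E_{12}^1$ of the path $\theta_1(e) = \eta_2\eta_1$. My plan is to concentrate the weight of $e$ entirely on this first edge, so that the ``second half'' contributes nothing. More precisely, I define $h \colon E_3^1 \to \Z$ by
\[
    h(\eta) = \begin{cases} f(\phi^{-1}(\eta)) & \text{if } \eta \in E_{12}^1, \\ 0 & \text{if } \eta \in E_{21}^1, \end{cases}
\]
which is well defined because $\phi$ is a bijection and $E_3^1 = E_{12}^1 \sqcup E_{21}^1$.

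Next, I verify that $\theta_1$ is weight-preserving with respect to $f$ and $h$. Given $e \in E_1^1$, write $\theta_1(e) = \eta_2 \eta_1$ with $\eta_2 \in E_{12}^1$ and $\eta_1 \in E_{21}^1$; then by hypothesis $\eta_2 = \phi(e)$, so extending $h$ additively along paths gives
\[
    h(\theta_1(e)) = h(\eta_2) + h(\eta_1) = f(\phi^{-1}(\phi(e))) + 0 = f(e),
\]
as required.

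Finally, to obtain $g$, I invoke \cref{rmk:only-E3-weight}: define $g \colon E_2^1 \to \Z$ by $g(\bar{e}) = h(\theta_2(\bar{e}))$ for each $\bar{e} \in E_2^1$. This makes $\theta_2$ weight-preserving by construction, and combined with the previous step it shows that $\theta_1$ and $\theta_2$ are simultaneously weight-preserving with respect to $f$, $g$, and $h$.

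There is no real obstacle here: the whole content of the proposition is that the bijection $\phi$ provides exactly the bookkeeping needed to transfer the weight from an edge $e \in E_1^1$ to a single edge of $E_3^1$, rather than having to split $f(e)$ in some ad hoc way across the two edges of $\theta_1(e)$. The only mild subtlety is checking that the assignment for $h$ on $E_{12}^1$ is unambiguous, which is immediate from $\phi$ being a bijection.
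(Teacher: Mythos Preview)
Your argument is correct and essentially identical to the paper's: you define $h$ to carry all the weight on $E_{12}^1$ via $\phi^{-1}$ and vanish on $E_{21}^1$, verify $h(\theta_1(e)) = f(e)$ exactly as the paper does, and then define $g$ by pushing $h$ through $\theta_2$. The only cosmetic difference is that the paper writes $g(\overline{e}) = h(\iota_2)$ for $\theta_2(\overline{e}) = \iota_1\iota_2$ and then checks this equals $h(\theta_2(\overline{e}))$, whereas you invoke \cref{rmk:only-E3-weight} directly; the content is the same.
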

    \begin{proof}
        Define the map $h\colon E_3^1 \to \Z$ by
        \[h(\eta) = \begin{cases}
            f(\phi^{-1}(\eta)) & \eta \in E_{12}^1 \\
            0 & \eta \in E_{21}^1,
        \end{cases}\]
        for all $\eta\in E_3^1$,
        and $g\colon E_2^1 \to \Z$ by $g(\overline{e}) = h(\iota_2)$, where $\theta_2(e) = \iota_1\iota_2$ for all $e\in E_2^1$.
        We can extend both of these functions to edge functions $g\colon E_3^* \to \Z$ and $h\colon E_2^* \to \Z$. 
        
        Next, let $e \in E_1^1$ with $\theta_1(e) = \eta_2\eta_1$. Then
        \[h(\theta_1(e)) = h(\eta_2) + h(\eta_1) = h(\phi(e)) = f(e).\]
        Similarly, for all $\overline{e} \in E^2$ with $\theta_2(\overline{e}) = \iota_1\iota_2$,
        \[h(\theta_2(\overline{e})) = h(\iota_1) + h(\iota_2) = h(\iota_2) = g(\overline{e}).\]
        So, $\theta_1$ and $\theta_2$ are weight-preserving.
    \end{proof}
    
    \begin{proposition}
        \label{prp:weight-preserving-with-bijection-E12}
        Let $E_1$ and $E_2$ be row-finite graphs that are SSE via $E_3$, 
        and assume there is a with a bijection $\phi\colon E_1^1 \to E_{21}^1$ such that for $e \in E_1^1$, if $\theta_1(e) = \eta_2\eta_1$, then $\phi(e) = \eta_1$. 
        Then for any edge function $f\colon E_1^1 \to \Z$, there exist corresponding edge functions $g\colon E_2^1 \to \Z$ and $h\colon E_3^1 \to \Z$ such that $\theta_1$ and $\theta_2$ are weight-preserving.
    \end{proposition}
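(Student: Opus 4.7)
The plan is to mirror the proof of~\cref{prp:weight-preserving-with-bijection-E21}, interchanging the roles of $E_{21}^1$ and $E_{12}^1$. Since $\phi\colon E_1^1 \to E_{21}^1$ now singles out the ``inner'' factor $\eta_1$ of the decomposition $\theta_1(e) = \eta_2 \eta_1$, I will concentrate the weight $f$ on the $E_{21}^1$-edges instead of on the $E_{12}^1$-edges.

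Concretely, first I would define $h\colon E_3^1 \to \Z$ by
\[
h(\eta) = \begin{cases} f(\phi^{-1}(\eta)) & \eta \in E_{21}^1, \\ 0 & \eta \in E_{12}^1, \end{cases}
\]
and extend it additively to $h\colon E_3^* \to \Z$. Next, I would observe that the source/range constraints on $E_3$, together with the path convention $s(\iota_1) = r(\iota_2)$, force every element of $E_3^2(E_2^0, E_2^0)$ to decompose as $\iota_1 \iota_2$ with $\iota_1 \in E_{21}^1$ and $\iota_2 \in E_{12}^1$. I would then define $g\colon E_2^1 \to \Z$ by $g(\overline{e}) := h(\iota_1)$ whenever $\theta_2(\overline{e}) = \iota_1 \iota_2$, and extend $g$ additively to $E_2^*$.

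The remaining verification of the weight-preserving conditions is a one-line calculation on each side: $h(\theta_1(e)) = h(\eta_2) + h(\eta_1) = 0 + f(\phi^{-1}(\eta_1)) = f(e)$ because $\phi(e) = \eta_1$, and $h(\theta_2(\overline{e})) = h(\iota_1) + h(\iota_2) = g(\overline{e}) + 0 = g(\overline{e})$ by construction of $g$.

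The only point where this duality argument can go astray is the bipartite bookkeeping that identifies which factor of $\theta_2(\overline{e})$ lives in $E_{21}^1$ and which in $E_{12}^1$; this is exactly what dictates on which side the weight $f$ should be placed (here on $E_{21}^1$, as opposed to on $E_{12}^1$ in the previous proposition). Once that is pinned down, the argument is perfectly symmetric to the proof of~\cref{prp:weight-preserving-with-bijection-E21}, and I do not expect any further obstacle.
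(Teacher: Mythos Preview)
Your proposal is correct and follows exactly the approach the paper takes: the paper's own proof simply states that it ``follows the same way as the proof above'' with the definition of $h$ swapped so that the weight sits on $E_{21}^1$ instead of $E_{12}^1$, which is precisely what you do. Your added bookkeeping about which factor of $\theta_2(\overline{e})$ lies in $E_{21}^1$ is correct and actually makes the duality more explicit than the paper does.
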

    \begin{proof}
        This follows the same way as the proof above, but we define the map $h\colon E_3^1 \to \Z$ as follows:
        \[h(e) = \begin{cases}
            f(\phi^{-1}(\eta)) & \eta \in E_{21}^1 \\
            0 & \eta \in E_{12}^1,
        \end{cases}\]
        for $\eta\in E_3^1$.
    \end{proof}
    
    \section{In-splits} \label{sec:insplits}
    In this section, we will explore in-splits of row-finite graphs and show that they are examples of strong shift equivalences.
    This slightly generalises~\cite[Section 6]{bates2004flow}. 
    We then show that the in-splits induce a *-isomorphism of graph C*-algebras that behaves nicely with respect to weighted gauge actions. 
    This allows us to formulate the question of whether diagonal-preserving *-isomorphims that behave nicely with respect to certain weighted gauge actions in fact characterises when two graph may be connected by in-splits, cf.~\cref{rem:in-split-question}.
    
    This definition of in-splits is from~\cite[Section 3]{bates2004flow}.
    \begin{definition}[In-splits]
        \label{def:in-splits}
        Let $E_1 = (E_1^0, E_1^1, r_1, s_1)$ be a directed graph. For each $v \in E_1^0$ that is not a source, partition the set $r_1^{-1}(v)$ into non-empty sets $\mathcal{E}^v_1, \mathcal{E}^v_2, \dots, \mathcal{E}^v_{m(v)}$ (if $v$ is a source, then $m(v) = 0$), and let $\mathcal{P}$ be the generated partition of $E_1^1$. We define the \emph{in-split graph} $E_2 = (E_2^0, E_2^1, r_2, s_2)$ as follows:
        \begin{align*}
            E_2^0 &= \{\overline{v}_i \mid v \in E_1^0, 1 \leq i \leq m(v)\} \cup \{\overline{v} \mid v \in E_1^0, m(v) = 0\}, \\
            E_2^1 &= \{\overline{e}_i \mid e \in E_1^1, 1 \leq i \leq m(s_1(e))\} \cup \{\overline{e} \mid e \in E_1^1, m(s_1(e)) = 0\},
        \end{align*}
        with the functions $s_2, r_2\colon E_2^1 \to E_2^0$ defined by:
        \[s_2(\overline{e}) = \overline{s_1(e)}, \qquad s_2(\overline{e}_j) = \overline{s_1(e)}_j, \qquad r_2(\overline{e}) = \overline{r_1(e)}_i, \qquad r_2(\overline{e}_j) = \overline{r_1(e)}_i,\]
        with $e \in \mathcal{E}_i^{r_1(e)}$. We say that the in-split is \emph{proper} if for all $v \in E_1^0$ with $r_1^{-1}(v)$ infinite, we have $m(v)$ finite and only one of the partition sets $\mathcal{E}_i^v$ is infinite. 
    \end{definition}
    
    \noindent We first verify that in-splits give examples of strong shift equivalences.
    
    \begin{lemma}
        \label{prp:in-split-SSE}
        Let $E_1$ be a row-finite directed graph and let $E_2$ be a proper in-split of $E_1$. Then there exists a row-finite directed graph $E_3$ such that $E_1$ and $E_2$ are SSE via $E_3$.  Moreover, for any edge function $f\colon E_1^1 \to \Z$, there exist corresponding edge functions $g\colon E_2^1 \to \Z$ and $h\colon E_3^1 \to \Z$ such that $\theta_1$ and $\theta_2$ (chosen in accordance with~\cref{def:sse}) are weight-preserving.
    \end{lemma}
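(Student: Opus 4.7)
The plan is to build an explicit bipartite graph $E_3$ with $E_3^0 = E_1^0 \cup E_2^0$ whose length-two paths match the in-split structure. In $E_{21}^1$, for each $e \in E_1^1$ with $e \in \mathcal{E}_i^{r_1(e)}$, I include a single edge $\zeta_e$ from $s_1(e)$ to $\overline{r_1(e)}_i$. In $E_{12}^1$, for each non-source $v \in E_1^0$ and each $1 \leq i \leq m(v)$, I include an edge $\alpha_i^v$ from $\overline{v}_i$ to $v$; and for each source $v \in E_1^0$ I include one extra edge $\beta^v$ from $\overline{v}$ to $v$. With this data I set $\theta_1(e) = \alpha_i^{r_1(e)} \zeta_e$ (where $e \in \mathcal{E}_i^{r_1(e)}$), and $\theta_2(\overline{e}_j) = \zeta_e \alpha_j^{s_1(e)}$, $\theta_2(\overline{e}) = \zeta_e \beta^{s_1(e)}$.

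The verifications are then largely bookkeeping. Row-finiteness of $E_3$ follows from row-finiteness of $E_1$: the in-edges of $v \in E_1^0$ are indexed by $\{1, \dots, m(v)\}$ (or by a singleton when $v$ is a source), and $m(v) \leq |r_1^{-1}(v)| < \infty$; the in-edges of $\overline{v}_i$ correspond to $\mathcal{E}_i^v \subseteq r_1^{-1}(v)$. Bijectivity of $\theta_1$ and $\theta_2$ is forced because every length-two path from $E_1^0$ to $E_1^0$ (respectively from $E_2^0$ to $E_2^0$) must begin with some $\zeta_e$ (respectively with some $\alpha_j^u$ or $\beta^u$), and the other edge is then determined by the compatibility $r_3(\eta_1) = s_3(\eta_2)$ and the partition class of $e$. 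For the weight-preserving statement, the bijection $\phi\colon E_1^1 \to E_{21}^1$ defined by $\phi(e) = \zeta_e$ satisfies the hypothesis of \cref{prp:weight-preserving-with-bijection-E12} and immediately produces the required edge functions $g$ and $h$.

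The main subtlety, which is the whole reason for introducing the extra edges $\beta^v$, is the fourth bullet of \cref{def:sse}. Without $\beta^v$, a source $v \in E_1^0$ would remain a source in $E_3$, and in general $s_3^{-1}(v) = \{\zeta_e : s_1(e) = v\}$ would contain several edges, violating the requirement that the outgoing edge at any $E_3$-source be unique. Inserting $\beta^v$ makes $v$ non-source in $E_3$; the only $E_3$-sources are then the vertices $\overline{v}$ arising from $E_1$-sources, and for these $s_3^{-1}(\overline{v}) = \{\beta^v\}$ and $r_3^{-1}(r_3(\beta^v)) = r_3^{-1}(v) = \{\beta^v\}$ by construction (since $m(v) = 0$ kills all $\alpha$-edges at $v$), so the condition is met.
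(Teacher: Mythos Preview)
Your construction is correct and essentially identical to the paper's: your edges $\zeta_e$, $\alpha_i^v$, and $\beta^v$ are precisely the paper's bijections $\phi_2\colon E_1^1 \to E_{21}^1$ and $\phi_1\colon E_2^0 \to E_{12}^1$, your $\theta_1$ and $\theta_2$ match the paper's, and you invoke \cref{prp:weight-preserving-with-bijection-E12} exactly as the paper does. The only difference is expository---you frame the $\beta^v$ edges as ``extra'' edges inserted to satisfy the source condition, whereas in the paper they arise automatically as part of the bijection $\phi_1$ (since $\overline{v}\in E_2^0$ whenever $v$ is a source)---but the resulting graph $E_3$ and the verifications are the same.
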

    \begin{proof}
        We construct the graph $E_3$ such that $E_1$ and $E_2$ are SSE via $E_3$. The vertex set $E_3^0$ is composed of vertices $v \in E_1^0$ and $\overline{v} \in E_2^0$. For every vertex $w \in E_1^0$, if $w$ is a source, then we have an edge $\eta \in E_{12}^1$ such that $s_3(\eta) = \overline{w}$ and $r_3(\eta) = w$; otherwise for every $1 \leq i \leq m(v)$, there exists an edge $\eta \in E_{12}^1$ with $s_3(\eta) = \overline{w}_i$ and $r_3(\eta) = w$- this defines a bijection $\phi_1\colon E_2^0 \to E_{12}^1$. Also, for every $f \in E_1^1$, there is an edge $\iota \in E_{21}^1$ such that $s_3(\iota) = s_1(f)$ and $r_3(\iota) = \overline{r_1(f)}_i$, where $f \in \mathcal{E}^{r_1(f)}_i$- this gives a bijection $\phi_2\colon E_1^1 \to E_{21}^1$.

        Now, we define the map $\theta_1\colon E_1^1 \to E_3^2(E_1^0, E_1^0)$ by $\theta_1(e) =  \phi_1(\overline{r_1(e)}_i) \phi_2(e) = \iota_2 \iota_1$, with $e \in \mathcal{E}^{r_1(e)}_i$. This is a valid path of length 2 since $r_3(\iota_1) = \overline{r_1(e)}_i = s_3(\iota_2)$. The edge $\iota_1$ has source $s_1(e)$ and the edge $\iota_2$ has range $r_1(e)$, so $\theta_1$ is source- and range-preserving. Moreover, for $e, f \in E_1^1$ with $\theta_1(e) = \theta_1(f)$, we have $\phi_2(e) = \phi_2(f)$. So, the injectivity of $\phi_2$ implies that $\theta_1$ is injective.

        Also, we claim that $\theta_1$ is surjective. Let $\eta_2\eta_1 \in E_3^2(E_1^0, E_1^0)$. Since $\phi_2$ is surjective, there exists an $e \in E_1^1$ such that $\phi_2(e) = \eta_1$. Let $e$ have source $v$ and range $w$. So, the edge $\eta_1$ must have source $v$ and range $\overline{w}_i$, with $e \in \mathcal{E}^{w}_i$. In that case, $\eta_2 = \phi_1(\overline{w}_i)$. Hence, $\theta_1(e) = \phi_1(\overline{w}_i) \phi_2(e) = \eta_2 \eta_1$. So, $\theta_1$ is surjective. This implies that $\theta_1$ is a bijection.

        Next, we define the map $\theta_2\colon E_2^1 \to E_3^2(E_2^0, E_2^0)$ by $\theta_2(\overline{e}) = \phi_2(e) \phi_1(\overline{s_1(e)}) = \iota_1\iota_2$ and $\theta_2(\overline{e}_j) = \phi_2(e) \phi_1(\overline{s_1(e)}_j) = \iota_1\iota_2$. This is a valid path since $r_3(\iota_2) = s_1(e) = s_3(\iota_1)$. The edge $\iota_2$ has source $\overline{s_1(e)} = s_2(\overline{e})$ or $\overline{s_1(e)}_i = s_2(\overline{e}_i)$, and $\iota_1$ has range $\overline{r_1(e)}_i$, with $e \in \mathcal{E}_i^{r_1(e)}$. So, $\theta_2$ is source- and range-preserving. 
        
        Now, we show that $\theta_2$ is injective. Let $\overline{e}, \overline{f} \in E_2^1$ with $\theta_2(\overline{e}) = \theta_2(\overline{f})$. Since $\phi_2$ is injective, we find that $e = f$. So, $\overline{e} = \overline{f}$. Next, let $\overline{e}_j, \overline{f}_k \in E_2^1$ with $\theta_2(\overline{e}_j) = \theta_2(\overline{f}_k)$. Like above, since $\phi_2$ is injective, we have $e = f$. Moreover, since $\phi_1$ is injective, we find that $\overline{s_1(e)}_j = \overline{s_1(f)}_k$, meaning that $j = k$. So, $\overline{e}_j = \overline{f}_k$. Hence, $\theta_2$ is injective.
        
        Next, we claim that $\theta_2$ is surjective. So, let $\iota_1\iota_2 \in E_3^2(E_2^0, E_2^0)$. We can find an $e \in E_1^0$ such that $\iota_1 = \phi_2(e)$. If $\iota_2 = \phi_1(\overline{v})$, then $s_1(e) = v$ is a source. So, $\theta_2(\overline{e}) = \phi_2(e) \phi_1(\overline{s_1(e)}) = \iota_1\iota_2$. Instead, if $\iota_2 = \phi_1(\overline{v}_i)$, then $\theta_2(\overline{e}_i) =  \phi_2(e) \phi_1(\overline{s_1(e)}_i) = \iota_1 \iota_2$. So, $\theta_2$ is surjective. This implies that $\theta_2$ is a bijection.

        Now, let $\overline{v} \in E_3^0$ be a source (in $E_3$). By $\phi_1$, we know that for all $v \in E_1^0$, $v$ is not a source (in $E_1$). So, we must have $\overline{v} \in E_2^0$. Since $\overline{v}$ is a source, we know that $m(v) = 0$, meaning that $s_3^{-1}(\overline{v}) = \{\varphi_1(\overline{v})\}$. Since $r_3(\varphi_1(\overline{v})) \in E_1^0$ and $\varphi_1$ is a bijection, we find that $r_3^{-1}(r_3(\varphi_1(\overline{v}))) = \{\varphi_1(\overline{v})\}$. Thus, $E_1$ and $E_2$ are SSE via $E_3$.
        
        Finally, consider the map $\phi_2\colon E_1^1 \to E_{21}^1$. This map satisfies the condition that for all $e \in E_1^1$, if $\theta_1(e) = \eta_2 \eta_1$, then $\eta_1 = \phi_2(e)$ by construction. Hence, by \Cref{prp:weight-preserving-with-bijection-E12}, we can construct the maps $g\colon E_2^* \to \Z$ and $h\colon E_3^* \to \Z$ give an edge map $f\colon E_1^* \to \Z$ that is weight-preserving accordingly.
    \end{proof}
    
    It now follows from~\cref{thm:morita-equivalence-sse} that for a row-finite graph and its in-split graph,
    the graph C*-algebras are Morita equivalent in a diagonal-preserving and gauge-equivariant way.
    However, in the case of in-splits the graph C*-algebras are in fact *-isomorphic and we can say more. 
    The theorem below should be compared to~\cite[Theorem 3.2]{bates2004flow} (which proves that the graph C*-algebras are *-isomorphic),~\cite[Theorem 1]{matsumoto_2022} (which applies to strongly connected finite graphs),~\cite[Theorem 3.2]{eilers2019refined} (which applies to graphs with finitely many vertices), and finally~\cite[Corollary 3.3]{ABCE} (which is from the point of view of conjugacy of dynamical systems).
    
    \begin{theorem}
        \label{thm:gauge-action-*-iso-in-split}
        Let $E_1$ be a row-finite directed graph, and let $E_2$ be a proper in-split graph of $E_1$. 
        Then there exists a *-isomorphism $\pi\colon C^*(E_1) \to C^*(E_2)$ satisfying $\pi_1(\D(E_1)) = \D(E_2)$ and, moreover, 
        for any edge function $f\colon E_1^1 \to \Z$, there exists an edge function $g\colon E_2^1 \to \Z$ such that $\pi \circ \gamma_z^{E_1, f} = \gamma_z^{E_2, g} \circ \pi$ for all $z \in \T$.
    \end{theorem}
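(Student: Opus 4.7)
My approach is to upgrade the Morita equivalence provided by \cref{thm:morita-equivalence-sse} to an honest $\ast$-isomorphism by constructing a partial isometry $V$ in $M(C^*(E_3))$ that implements an equivalence between the corner projections $P_1$ and $P_2$. The plan is first to invoke \cref{prp:in-split-SSE} to obtain the auxiliary graph $E_3$ witnessing the SSE between $E_1$ and $E_2$, together with the bijection $\phi_1\colon E_2^0 \to E_{12}^1$ from the proof of that lemma and, given $f$, the edge functions $g\colon E_2^1 \to \Z$ and $h\colon E_3^1 \to \Z$ for which $\theta_1$ and $\theta_2$ are weight-preserving. Crucially, by the construction in \cref{prp:weight-preserving-with-bijection-E12}, the map $h$ vanishes identically on $E_{12}^1$. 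Combining \cref{thm:morita-equivalence-sse} with \cref{prp:weight-preserving-iff-gauge-action} then yields diagonal-preserving $\ast$-isomorphisms $\pi_i\colon C^*(E_i)\to P_iC^*(E_3)P_i$ that are $\gamma^{E_i,\cdot}$--$\gamma^{E_3,h}$-equivariant.

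The key new step is to define
\[
V = \sum_{\overline{w}\in E_2^0} u_{\phi_1(\overline{w})} \in M(C^*(E_3)),
\]
where the sum converges strictly because the summands have pairwise orthogonal initial and final projections. Using the Cuntz--Krieger orthogonality relations, one checks $V^*V = \sum_{\overline{w}} r_{\overline{w}} = P_2$; for $VV^*$, one uses (CK2) in $E_3$ after noting that $\{\phi_1(\overline{w}) : r_3(\phi_1(\overline{w})) = v\}$ enumerates exactly $r_3^{-1}(v)$ for every $v\in E_1^0$ (including the source-in-$E_1$ case, by the last clause of \cref{def:sse}), whence $VV^* = \sum_{v\in E_1^0} r_v = P_1$. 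Consequently $\mathrm{Ad}(V^*)$ restricts to a $\ast$-isomorphism $P_1C^*(E_3)P_1 \to P_2C^*(E_3)P_2$, and I define $\pi = \pi_2^{-1}\circ \mathrm{Ad}(V^*) \circ \pi_1$.

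It then remains to verify the two additional properties of $\pi$. For the diagonal, I would check that $\mathrm{Ad}(V^*)$ sends each generator $u_\mu u_\mu^*$ of $P_1 \D(E_3) P_1$ to an element of $P_2 \D(E_3) P_2$, with a case split on $|\mu|=0$ (where $V^* r_v V$ becomes a finite sum of the vertex projections $r_{\overline{w}}$ indexed over $\overline{w}$ with $r_3(\phi_1(\overline{w}))=v$) and $|\mu|\geq 1$ (where $r_3(\mu)\in E_1^0$ forces the initial edge of $\mu$ to lie in $E_{12}^1$ and so equal some $\phi_1(\overline{w})$, and $V^*u_\mu u_\mu^* V = u_{\mu'}u_{\mu'}^*$ with $\mu'$ the tail of $\mu$). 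The reverse inclusion follows by a symmetric check for $\mathrm{Ad}(V)$. Gauge equivariance is cleaner: since $h \equiv 0$ on $E_{12}^1$, the strict extension of $\gamma^{E_3,h}_z$ to the multiplier algebra fixes each summand of $V$, hence fixes $V$, so $\mathrm{Ad}(V^*)$ commutes with $\gamma^{E_3,h}$; composing with the equivariance of $\pi_1$ and $\pi_2$ yields $\pi\circ \gamma^{E_1,f}_z = \gamma^{E_2,g}_z\circ \pi$.

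The principal technical hurdle I expect is the bookkeeping for the identities $V^*V = P_2$ and $VV^* = P_1$ when $E_2^0$ is infinite (justifying strict convergence), together with the path-level case analysis behind the preservation of the diagonal under $\mathrm{Ad}(V^*)$. Once these are handled carefully, the gauge equivariance and the final composition are immediate, and the theorem follows.
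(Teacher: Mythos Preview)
Your approach is correct and genuinely different from the paper's. The paper does not pass through $E_3$ at all: it quotes the explicit $\ast$-isomorphism from \cite[Theorem~3.1]{bates2004flow}, namely $\pi(s_e)=\sum_i t_{\overline{e}_i}$ and $\pi(p_v)=\sum_i q_{\overline{v}_i}$, invokes \cite[Theorem~3.2]{eilers2019refined} for the diagonal statement, and then simply declares $g(\overline{e}_i)=f(e)$ and checks $\pi\circ\gamma^{E_1,f}_z=\gamma^{E_2,g}_z\circ\pi$ on generators by a one-line computation.

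Your route is more self-contained and more conceptual. By exhibiting the partial isometry $V=\sum_{\overline w}u_{\phi_1(\overline w)}$ with $V^*V=P_2$ and $VV^*=P_1$, you explain \emph{why} the in-split upgrades the Morita equivalence of \cref{thm:morita-equivalence-sse} to an isomorphism: the two corners are Murray--von Neumann equivalent inside $M(C^*(E_3))$. The equivariance then comes for free from the structural fact that $h\equiv 0$ on $E_{12}^1$ (built into \cref{prp:weight-preserving-with-bijection-E12}), whereas the paper re-derives the correct $g$ by hand. A pleasant bonus is that tracing your composite $\pi_2^{-1}\circ\mathrm{Ad}(V^*)\circ\pi_1$ on $s_e$ reproduces exactly the Bates--Pask formula $\sum_i t_{\overline e_i}$, so the two isomorphisms coincide. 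The trade-off is length: your argument requires the strict-convergence bookkeeping for $V$ and the path-level case analysis for the diagonal, while the paper's version is a few lines plus two citations. One minor point: your appeal to ``the last clause of \cref{def:sse}'' for $VV^*=P_1$ is unnecessary, since the construction in \cref{prp:in-split-SSE} already guarantees that no vertex of $E_1^0$ is a source in $E_3$, so (CK2) applies directly.
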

    \begin{proof}
    Let $\{s,p\}$ and $\{t,q\}$ be universal Cuntz--Krieger families for $C^*(E_1)$ and $C^*(E_2$), respectively.
        The isomorphism result is given in \cite[Theorem~3.1]{bates2004flow}.
        An explicit *-isomorphism is given as $\pi\colon C^*(E_1) \to C^*(E_2)$ where 
        \[\pi(s_e) = \begin{cases}
            t_{\overline{e}} & m(s(e)) = 0 \\
            \sum_{i=1}^{m(s(e))} t_{\overline{e}_i} & \textrm{otherwise},
        \end{cases} \qquad \pi(p_v) = \begin{cases}
            q_{\overline{v}} & m(v) = 0 \\
            \sum_{i=1}^{m(v)} q_{\overline{v}_i} & \textrm{otherwise}
        \end{cases}\]
        for all $e\in E_1^1$ and $v\in E_1^0$.
        The fact that $\pi(\D(E_1)) = \D(E_2)$ is given in~\cite[Theorem~3.2]{eilers2019refined}.
        
        We now show the equivariance result.
        Let $e \in E_1^1$. 
        If $s_1(e)$ is a source, then define $g(\overline{e}) = f(e)$. 
        Otherwise, for $1 \leq i \leq m(s_1(e))$, define $g(\overline{e}_i) = f(e)$. 
        This can be extended into an edge function $g\colon E_2^* \to \Z$. 
        
        Now, let $z \in \T$. We claim that $\pi \circ \gamma_z^{E_1, f} = \gamma_z^{E_2, g} \circ \pi$ for all $z\in \T$. 
        First, let $v \in E_1^0$. If $v$ is a source, then we have
        \[
        (\gamma^{E_2, g}_z \circ \pi)(p_v) = \gamma_z^{E_2, g}(q_{\overline{v}}) = q_{\overline{v}} = \pi(p_v) = (\pi \circ \gamma_z^{E_1, f})(p_v).
        \]
        Instead, if $v$ is not a source, then
        \[
        (\gamma^{E_2, g}_z \circ \pi)(p_v) = \sum_{i=1}^{m(v)} \gamma_z^{E_2, g}(q_{\overline{v}_i}) = \sum_{i=1}^{m(v)} p_{\overline{v}_i} = P_v = (\pi \circ \gamma_z^{E_1, f})(p_v).
        \]
        Next, let $e \in E_1^1$. If $s_1(e)$ is a source, we have
        \[
        (\gamma^{E_2, g}_z \circ \pi)(s_e) = \gamma_z^{E_2, g}(t_{\overline{e}}) = z^{g(\overline{e})} t_{\overline{e}} = z^{f(e)} \pi(s_e) = (\pi \circ \gamma_z^{E_1, f})(s_e).
        \]
        Instead, if $s_1(e)$ is not a source, then we have
        \[
        (\gamma^{E_2, g}_z \circ \pi)(s_e) = \sum_{i=1}^{m(s_1(e))} z^{g(\overline{e}_i)} t_{\overline{e}_i} 
        = z^{f(e)}  \sum_{i=1}^{m(s_1(e))} t_{\overline{e}_i} 
        = (\pi \circ \gamma^{E_1, f})(s_e).
        \]
        So, we have $\gamma_z^{E_2, g} \circ \pi = \pi \circ \gamma_z^{E_1, f}$ with respect to the generators. 
        Hence, $\gamma_z^{E_2, g} \circ \pi = \pi \circ \gamma_z^{E_1, f}$ for all $z\in \T$.
    \end{proof}
    
    \begin{example}
        Let $E_1$ be the following weighted graph:
        \begin{figure}[H]
            \centering
            \begin{tikzpicture}
                \node (v) at (0, 0) {$v$};
                \node (w) at (1.5, 0) {$w$};
                
                \arrow (v) edge [loop left] node {1} (v);
                \arrow (w) -- node[above] {2} (v);
            \end{tikzpicture}
        \end{figure}
        \noindent After a proper in-split at the vertex $v$, we obtain the following weighted graph $E_2$:
        \begin{figure}[H]
            \centering
            \begin{tikzpicture}
                \node (v1) at (-1.5, 0) {$\overline{v}_1$};
                \node (v2) at (0, 0) {$\overline{v}_2$};
                \node (w) at (1.5, 0) {$\overline{w}$};
                
                \arrow (v1) edge [loop left] node {1} (v1);
                \arrow (v2) -- node[above] {1} (v1);
                \arrow (w) -- node[above] {2} (v2);
            \end{tikzpicture}
        \end{figure}
    \end{example}
    
    \begin{example}
        \label{rmk:*-iso-reverse}
        \Cref{thm:gauge-action-*-iso-in-split} allows us to construct the edge function $g$ on $E_2$ given an edge function $f$ on $E_1$. The converse is not normally true. For instance, consider the following pair of graphs $E_1$ and $E_2$:
        \begin{figure}[H]
            \centering
            \begin{subfigure}{0.45\textwidth}
                \centering
                \begin{tikzpicture}
                    \node (w) at (0, 0) {$w$};
                    \node (x) at (0, -1.5) {$x$};
                    \node (y) at (1.5, -0.75) {$y$};
                    \node (z) at (3, -0.75) {$z$};
    
                    \arrow (w) -- (y);
                    \arrow (x) -- (y);
                    \arrow (y) -- node[above] {$e$} (z);
                \end{tikzpicture}
                \caption{$E_1$}
            \end{subfigure}
            \hfill
            \begin{subfigure}{0.45\textwidth}
                \centering
                \begin{tikzpicture}
                    \node (W) at (0, 0) {$\overline{w}$};
                    \node (X) at (0, -1.5) {$\overline{x}$};
                    \node (Y1) at (1.5, 0) {$\overline{y}_1$};
                    \node (Y2) at (1.5, -1.5) {$\overline{y}_2$};
                    \node (Z) at (3, -0.75) {$\overline{z}_1$};
    
                    \arrow (W) -- (Y1);
                    \arrow (X) -- (Y2);
                    \arrow (Y1) -- node[above] {$\overline{e}_1$} (Z);
                    \arrow (Y2) -- node[below] {$\overline{e}_2$} (Z);
                \end{tikzpicture}
                \caption{$E_2$}
            \end{subfigure}
        \end{figure}
        \noindent Let $g\colon E_2^1 \to \Z$ be an edge function on $E_2$ such that $g(\overline{e}_1) \neq g(\overline{e}_2)$, and let $f\colon E_1^1 \to \Z$ be an arbitrary edge function. 
        Then
        \begin{align*}
            (\gamma_z^{E_2, g} \circ \pi)(s_e) &= z^{g(\overline{e}_1)} t_{\overline{e}_1} + z^{g(\overline{e}_2)} t_{\overline{e}_2}, \\
            (\pi \circ \gamma^{E_1, f})(s_e) &= z^{f(e)} t_{\overline{e}_1} + z^{f(e)} t_{\overline{e}_2}.
        \end{align*}
        \noindent Since $g(\overline{e}_1) \neq g(\overline{e}_2)$, there cannot exist an edge function $f$ on $E_1$ such that $(\gamma_z^{E_2, g} \circ \pi)(s_e) = (\pi \circ \gamma^{E_1, f})(s_e)$. Nonetheless, it might be possible to construct a weight function such that $\gamma^{E_2, g} \circ \pi = \pi \circ \gamma^{E_1, f}$.
    \end{example}
    
    \begin{remark}
    \label{rem:in-split-question}
        We may now ask whether the converse of~\cref{thm:gauge-action-*-iso-in-split} holds.
        Namely, if $\pi\colon C^*(E_1) \to C^*(E_2)$ is a *-isomorphism of graph $C^*$-algebras satisfying the conclusion of the previous theorem,
        does it follow that $E_1$ and $E_2$ can be connected by in-splits (and inverse)? 
        A solution to this problem might involve some generalisation of in-splits.
    \end{remark}
    
    \section{Out-splits} \label{sec:outsplits}
    In this section, we will explore a dual notion to in-splits called out-splits. 
    This too will lead to a strong shift equivalence. 
    
    This definition is from~\cite[Section 5]{bates2004flow}.
    \begin{definition}[Out-splits]
        \label{def:out-splits}
        Let $E_1$ be a directed graph, and for each $v \in E_1^0$ not source, partition the set $s_1^{-1}(v)$ into sets $\mathcal{E}_v^1, \mathcal{E}_v^2, \dots, \mathcal{E}_v^{m(v)}$ (if $v$ is a source, then $m(v) = 0$), and let $\mathcal{P}$ be the generated partition of $E_1^1$ (including $s^{-1}(v)$ for $v$ source, without partitioning). We define the \emph{out-split graph} $E_2$ as follows:
        \begin{align*}
            E_2^0 &= \{\overline{v}^i \mid v \in E_1^0, 1 \leq i \leq m(v)\} \cup \{\overline{v} \mid v \in E_1^0, m(v) = 0\}, \\
            E_2^1 &= \{\overline{e}^i \mid e \in E_1^1, 1 \leq i \leq m(r_1(e))\} \cup \{\overline{e} \mid e \in E_1^1, m(r_1(e)) = 0\}.
        \end{align*}
        The functions $s_2, r_2\colon E_2^1 \to E_2^0$ are given by:
        \[s_2(\overline{e}) = \overline{s_1(e)}^i, \qquad s_2(\overline{e}^j) = \overline{s_1(e)}^i, \qquad r_2(\overline{e}) = \overline{r_1(e)}, \qquad r_2(\overline{e}^j) = \overline{r_1(e)}^j,\]
        with $e \in \mathcal{E}^i_{r_1(e)}$. We say that the out-split is \emph{proper} if for all $v \in E_1^0$ with $r_1^{-1}(v)$ infinite, we have $m(v) = 1$, i.e. infinite receivers cannot be split \cite[Section~5]{bates2004flow}. 
    \end{definition}
    
    For out-splits, there need not be a *-isomorphism between $C^*(E_1)$ and $C^*(E_2)$. Nonetheless, we can show that the C*-algebras are Morita equivalent, like in the case of in-splits.
    \begin{lemma}
        \label{lem:out-split-sse}
        Let $E_1$ be a row-finite directed graph and let $E_2$ be a proper out-split of $E_1$. 
        Then there exists a row-finite directed graph $E_3$ such that $E_1$ and $E_2$ are SSE via $E_3$.
        Moreover, for any edge function $f\colon E_1^1 \to \Z$, there exist corresponding edge functions $g\colon E_2^1 \to \Z$ and $h\colon E_3^1 \to \Z$ 
        such that $\theta_1$ and $\theta_2$ (chosen according to~\cref{def:sse} are weight-preserving.
    \end{lemma}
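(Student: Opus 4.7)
The plan is to construct $E_3$ as the natural dual of the in-split SSE graph built in \Cref{prp:in-split-SSE}, with the roles of $E_{12}^1$ and $E_{21}^1$ swapped to reflect that the partition is now on outgoing edges. I would start by setting $E_3^0 = E_1^0 \cup E_2^0$ and introducing two bijections. For each vertex of $E_2^0$, I put a single edge in $E_{21}^1$: namely $\phi_1(\overline{v}^i)\colon v \to \overline{v}^i$ for $1 \leq i \leq m(v)$, and $\phi_1(\overline{v})\colon v \to \overline{v}$ when $m(v) = 0$. This yields a bijection $\phi_1\colon E_2^0 \to E_{21}^1$. For each $e \in E_1^1$ with $e \in \mathcal{E}^i_{s_1(e)}$, I put a single edge in $E_{12}^1$: namely $\phi_2(e)\colon \overline{s_1(e)}^i \to r_1(e)$ (or $\overline{s_1(e)} \to r_1(e)$ when $m(s_1(e)) = 0$), yielding a bijection $\phi_2\colon E_1^1 \to E_{12}^1$.

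Next, I would define $\theta_1(e) = \phi_2(e)\,\phi_1(\overline{s_1(e)}^i)$, corresponding to the length-two path $s_1(e) \to \overline{s_1(e)}^i \to r_1(e)$, and $\theta_2(\overline{e}^j) = \phi_1(\overline{r_1(e)}^j)\,\phi_2(e)$, corresponding to $\overline{s_1(e)}^i \to r_1(e) \to \overline{r_1(e)}^j$, with the obvious adjustments when $m(s_1(e)) = 0$ or $m(r_1(e)) = 0$. Verifying that $\theta_1$ and $\theta_2$ are source- and range-preserving bijections is parallel to the argument in \Cref{prp:in-split-SSE}: injectivity comes from the injectivity of $\phi_1$ and $\phi_2$ together with the fact that the partition index $i$ is determined by $e$; surjectivity follows by reading off the data $(e,j)$ from a generic length-two path through $E_1^0$ or $E_2^0$.

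For the source condition in \Cref{def:sse}, every vertex in $E_2^0$ is the range of exactly one $\phi_1$-edge, so the only sources in $E_3$ are those $v \in E_1^0$ with $r_1^{-1}(v) = \varnothing$, i.e.\ the sources of $E_1$. Each such source has $m(v) = 0$, so there is a unique vertex $\overline{v} \in E_2^0$ and a unique edge $\eta = \phi_1(\overline{v})$ with $s_3(\eta) = v$; by bijectivity of $\phi_1$ we then have $r_3^{-1}(\overline{v}) = \{\eta\}$, confirming the required condition. One should also check that $E_3$ remains row-finite, which is immediate since $r_3^{-1}(v) = r_1^{-1}(v)$ for $v \in E_1^0$ and $|r_3^{-1}(\overline{v}^i)| = 1$ for vertices of $E_2^0$.

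Finally, the construction is arranged so that the bijection $\phi_2\colon E_1^1 \to E_{12}^1$ satisfies $\phi_2(e) = \eta_2$ whenever $\theta_1(e) = \eta_2\eta_1$, so \Cref{prp:weight-preserving-with-bijection-E21} applies directly to produce edge functions $g$ and $h$ making $\theta_1$ and $\theta_2$ weight-preserving. The main piece of bookkeeping that requires care throughout is distinguishing the vertices with $m(v) = 0$ from those with $m(v) \geq 1$ when writing down $\phi_1$, $\phi_2$, $\theta_1$, and $\theta_2$; beyond this, the argument is entirely dual to that of \Cref{prp:in-split-SSE}, and no new ideas are needed.
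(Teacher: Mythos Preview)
Your proposal is correct and follows essentially the same approach as the paper: you build $E_3$ via the two bijections $\phi_1\colon E_2^0 \to E_{21}^1$ and $\phi_2\colon E_1^1 \to E_{12}^1$, define $\theta_1$ and $\theta_2$ exactly as in the paper, and invoke \Cref{prp:weight-preserving-with-bijection-E21} for the weight-preserving part. Your treatment of the source condition and the row-finiteness check is in fact more explicit than the paper's, which simply defers to the in-split argument.
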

    \begin{proof}
        We construct the graph $E_3$ via which $E_1$ and $E_2$ are SSE. We define $E_3^0 := E_1^0 \cup E_2^0$. For every vertex $w \in E_1^0$, if $m(w) = 0$, then we have an edge $\eta \in E_{21}^1$ such that $s_3(\eta) = w$ and $r_3(\eta) = \overline{w}$; otherwise for every $1 \leq i \leq m(v)$, there exists an edge $\eta \in E_{21}^1$ with $s_3(\eta) = w$ and $r_3(\eta) = \overline{w}^i$- this defines a bijection $\phi_1\colon E_2^0 \to E_{21}^1$. Also, for every $e \in E_1^1$, there is an edge $\iota \in E_{12}^1$ such that $s_3(\iota) = \overline{s_1(e)}^i$ and $r_3(\iota) = r_1(e)$, where $e \in \mathcal{E}_{s_1(e)}^i$- this gives a bijection $\phi_2\colon E_1^1 \to E_{12}^1$.

        Now, we can define the map $\theta_1\colon E_1^1 \to E_3^2(E_1^0, E_1^0)$ by $\theta_1(e) = \phi_2(e) \phi_1(\overline{s_1(e)}^i) = \iota_2\iota_1$, with $e \in \mathcal{E}_{s_1(e)}^i$, and $\theta_2\colon E_2^1 \to E_3^2(E_2^0, E_2^0)$ by $\theta_2(\overline{e}) = \phi_1(\overline{r_1(e)}) \phi_2(e) = \iota_1\iota_2$ and $\theta_2(\overline{e}^j) = \phi_1(\overline{r_1(e)}^j) \phi_2(e)$. Using the same argument as for in-splits, we find that $E_1$ and $E_2$ are SSE via $E_3$.
        
        Finally, consider the map $\phi_2\colon E_1^1 \to E_{12}^1$. This map satisfies the condition that for all $e \in E_1^1$, if $\theta_1(e) = \eta_2 \eta_1$, then $\eta_2 = \phi_2(e)$ by construction. Hence, by \Cref{prp:weight-preserving-with-bijection-E21}, we can construct the maps $g$ and $h$ accordingly.
    \end{proof}

    The corollary below now follows from~\cref{thm:morita-equivalence-sse} and~\cref{lem:out-split-sse}.
    This result should be compared to~\cite[Theorem 1.3]{Matsumoto2017}.
    \begin{corollary}
    Let $E_1$ be a row-finite directed graph and let $E_2$ be a proper out-split of $E_1$.
    If $f\colon E_1^1\to \Z$ is any edge function then there exists an edge function $g\colon E_2^1\to \Z$ such that the triples
    $(C^*(E_1), \gamma^{E_1,f}, \D(E_1))$ and $(C^*(E_2), \gamma^{E_2,g}, \D(E_2))$ are Morita equivalent.
    \end{corollary}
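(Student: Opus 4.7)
The plan is to combine the two results cited in the statement, routing the equivariance through~\cref{prp:weight-preserving-iff-gauge-action}. First, I would apply~\cref{lem:out-split-sse} to obtain a row-finite graph $E_3$ via which $E_1$ and $E_2$ are SSE, together with a choice of bijections $\theta_1, \theta_2$ as in~\cref{def:sse} and edge functions $g\colon E_2^1 \to \Z$ and $h\colon E_3^1 \to \Z$ making $\theta_1$ and $\theta_2$ weight-preserving with respect to $f$, $g$, and $h$.

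Next, I would apply~\cref{thm:morita-equivalence-sse} to obtain the complementary full projections $P_1, P_2\in M(C^*(E_3))$ and the diagonal-preserving *-isomorphisms $\pi_i\colon C^*(E_i) \to P_i C^*(E_3) P_i$. In place of the particular action $\alpha$ constructed there, I would use the weighted gauge action $\gamma^{E_3,h}$ on $C^*(E_3)$. This action fixes every vertex projection $r_v$ and hence each partial sum defining $P_1$ and $P_2$, so it extends to $M(C^*(E_3))$ fixing each $P_i$ and therefore restricts to an action on each corner $P_i C^*(E_3) P_i$, which is exactly what the definition of equivariant Morita equivalence requires.

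Finally, I would invoke~\cref{prp:weight-preserving-iff-gauge-action}: weight-preservation of $\theta_1$ and $\theta_2$ is precisely the statement that $\pi_1 \circ \gamma^{E_1,f}_z = \gamma^{E_3,h}_z \circ \pi_1$ and $\pi_2 \circ \gamma^{E_2,g}_z = \gamma^{E_3,h}_z \circ \pi_2$ for all $z\in\T$. Together with the diagonal-preservation already delivered by~\cref{thm:morita-equivalence-sse}, this yields the Morita equivalence of the triples $(C^*(E_1), \gamma^{E_1,f}, \D(E_1))$ and $(C^*(E_2), \gamma^{E_2,g}, \D(E_2))$. I do not expect a real obstacle here: the substantive work has already been done in~\cref{lem:out-split-sse}, whose explicit bijection $\phi_2\colon E_1^1\to E_{12}^1$ fits the hypothesis of~\cref{prp:weight-preserving-with-bijection-E21} and thereby furnishes the compatible edge functions; what remains is assembly.
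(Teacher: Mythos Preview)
Your proposal is correct and follows essentially the same approach as the paper, which simply states that the corollary follows from~\cref{thm:morita-equivalence-sse} and~\cref{lem:out-split-sse}. You have made explicit the one step the paper leaves implicit: the passage from the weight-preserving property of $\theta_1,\theta_2$ (supplied by~\cref{lem:out-split-sse}) to equivariance of the isomorphisms $\pi_i$ with respect to the weighted actions $\gamma^{E_i,\cdot}$ and $\gamma^{E_3,h}$, which is exactly the content of~\cref{prp:weight-preserving-iff-gauge-action}.
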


    \begin{example}
        Let $E_1$ be the following weighted graph:
        \begin{figure}[H]
            \centering
            \begin{tikzpicture}
                \node (w) at (0, 0) {$w$};
                \node (x) at (1.5, 0) {$x$};
                \node (y) at (3, 0.75) {$y$};
                \node (z) at (3, -0.75) {$z$};

                \arrow (w) edge[loop above] node {$1$} (w);
                \arrow (w) -- node[above] {$2$} (x);
                \arrow (x) -- node[above] {$3$} (y);
                \arrow (x) -- node[below] {$4$} (z);
            \end{tikzpicture}
        \end{figure}        
        \noindent Then the weight on a proper out-split graph of $E_1$, $E_2$, is the following:
        \begin{figure}[H]
            \centering
            \begin{tikzpicture}
                \node (w) at (0, 0) {$\overline{w}^1$};
                \node (x1) at (1.5, 0.75) {$\overline{x}^1$};
                \node (x2) at (1.5, -0.75) {$\overline{x}^2$};
                \node (y) at (3, 0.75) {$\overline{y}$};
                \node (z) at (3, -0.75) {$\overline{z}$};
                
                \arrow (w) edge[loop above] node {$1$} (w);
                \arrow (w) -- node[above] {$2$} (x1);
                \arrow (w) -- node[below] {$2$} (x2);
                \arrow (x1) -- node[above] {$3$} (y);
                \arrow (x2) -- node[below] {$4$} (z);
            \end{tikzpicture}
        \end{figure}
        \noindent And the weight on $E_3$ is the following:
        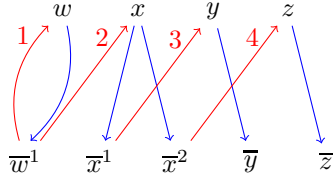
\begin{figure}[H]
            \centering
            \begin{tikzpicture}
                \node (w) at (0, 2) {$w$};
                \node (x) at (1, 2) {$x$};
                \node (y) at (2, 2) {$y$};
                \node (z) at (3, 2) {$z$};

                \node (W) at (-0.5, 0) {$\overline{w}^1$};
                \node (X1) at (0.5, 0) {$\overline{x}^1$};
                \node (X2) at (1.5, 0) {$\overline{x}^2$};
                \node (Y) at (2.5, 0) {$\overline{y}$};
                \node (Z) at (3.5, 0) {$\overline{z}$};

                \bluearrow (w) edge[bend left] (W.75);
                \bluearrow (x) -- (X1);
                \bluearrow (x) -- (X2);
                \bluearrow (y) -- (Y);
                \bluearrow (z) -- (Z);

                \redarrow (W) edge[bend left] node[left, pos=0.9] {$1$} (w);
                \redarrow (W) -- node[left, pos=0.9] {$2$} (x);
                \redarrow (X1) -- node[left, pos=0.9] {$3$} (y);
                \redarrow (X2) -- node[left, pos=0.9] {$4$} (z);
            \end{tikzpicture}
            \caption{The directed graph $E_3$ via which $E_1$ and $E_2$ are SSE. The blue arrows have weight 0, and the weight for the red arrows is given --- this is directly taken using the bijection $\phi_2\colon E_1^1 \to E_{12}^1$.}
        \end{figure}
    \end{example}
    
    \begin{remark}
        \label{rmk:SSE-reverse}
        Like for the isomorphism $\pi$ in the case of in-splits, it is only typically possible to construct an edge potential from the original graph $E_1$ to its in-split/out-split graph $E_2$ (and $E_3$). We cannot use the same idea as the one above to construct an edge function on $E_1$ given an edge function on $E_2$. In particular, there need not be a bijection from $E_2^1$ to either $E_{12}^1$ or $E_{21}^1$. 
        For instance, let $E_1$, $E_2$ and $E_3$ be the following graphs:
        \begin{figure}[H]
            \centering
            \begin{subfigure}{0.30\textwidth}
                \centering
                \begin{tikzpicture}
                    \node (v) at (0, 0) {$v$};
                    
                    \arrow (v) edge [loop above] (v);
                    \arrow (v) edge [loop below] (v);
                \end{tikzpicture}
                \caption{$E_1$}
            \end{subfigure}
            \hfill
            \begin{subfigure}{0.30\textwidth}
                \centering
                \begin{tikzpicture}
                    \node (V1) at (0, 0) {$\overline{v}^1$};
                    \node (V2) at (1.5, 0) {$\overline{v}^2$};
                    
                    \arrow (V1) edge [loop above] node {$1$} (V1);
                    \arrow (V1) edge [bend left] node[above] {$2$} (V2);
                    \arrow (V2) edge [bend left] node[below] {$3$} (V1);
                    \arrow (V2) edge [loop above] node {$5$} (V2);
                \end{tikzpicture}
                \caption{$E_2$}
            \end{subfigure}
            \hfill
            \begin{subfigure}{0.30\textwidth}
                \centering
                \begin{tikzpicture}
                    \node (v) at (0, 2) {$v$};
    
                    \node (V1) at (-1, 0) {$\overline{v}_1$};
                    \node (V2) at (1, 0) {$\overline{v}_2$};
    
                    \bluearrow (v) edge[bend left] node[left] {$a$} (V1);
                    \bluearrow (v) edge[bend left] node[right] {$b$} (V2);
                    
                    \redarrow (V1) edge[bend left] node[left] {$c$} (v);
                    \redarrow (V2) edge[bend left] node[right] {$d$} (v.-60);
                \end{tikzpicture}
                \caption{$E_3$}
            \end{subfigure}
        \end{figure}
        \noindent The weights on $E_2^1$ give rise to a specific edge function on $E_2$, and similarly an arbitrary edge function on $E_3$. Although $E_1$ and $E_2$ are SSE via $E_3$, for the edge function on $E_2$, we cannot construct an edge function on $E_3$, i.e. the values $a, b, c, d$ cannot be defined. If this were possible, we would require $\theta_2$ to be weight-preserving. That is, we need the following system of equations to have a solution:
        \[\begin{cases}
            c + a = 1 \\
            c + b = 2 \\
            d + a = 3 \\
            d + b = 5.
        \end{cases}\]
        However, 
        \[d + b = (c + b) - (c + a) + (d + a) = 4.\]
        This implies that the system of equations has no solutions --- we cannot construct an edge function $E_3$. Note that it is possible to construct edge functions on $E_3$ by slightly changing the edge function on $E_2$ --- if the loop on $v_2$ has weight $4$.
    \end{remark}
    
    \begin{remark}
    From a row-finite graph $E$ we may construct the triple consisting of the graph C*-algebra $(C^*(E)$, its canonical gauge action $\gamma^E$, and the diagonal subalgebra $\D(E))$. 
    It would be interesting to clarify the connection of Morita equivalence of such triples to the characterisation of strong shift equivalence for finite graphs with no sinks or sources given in~\cite{Carlsen-Rout}.
    At the moment, we do not know if strong shift equivalence for row-finite graphs is generated by in-splits and out-splits
    although it seems that some kind of generalised notion of splittings is needed for this to be case.
    The weighted gauge actions play an important role in the characterisations of conjugacy in~\cite{matsumoto_2022,ABCE}
    and this might also be the case for strong shift equivalence.
    \end{remark}

    \bibliographystyle{alpha}
    \bibliography{citations}
\end{document}